\theoremstyle{definition}
\newtheorem{theorem}{Theorem}
\newtheorem{lemma}[theorem]{Lemma}
\newtheorem{corl}[theorem]{Corollary}
\newcommand{\innprod}[2]{\langle #1,#2 \rangle}
\newcommand{\fld}[1]{\mathbb{#1}}
\newcommand{\diag}{\text{diag}}
\newcommand{\Diag}{\text{Diag}}
\newcommand{\allones}{\mathbbm{1}}
\newcommand{\mc}{\text{mc}}
\newcommand{\fcc}{\text{fcc}}
\newcommand{\littletaller}{\mathchoice{\vphantom{\big|}}{}{}{}}
\newcommand\restr[2]{{
  \left.\kern-\nulldelimiterspace 
  #1 
  \littletaller 
  \right|_{#2} 
  }}
\renewcommand{\qedsymbol}{$\blacksquare$}
\renewcommand{\sc}{\textsc}
\begin{document}
\pagestyle{plain}

\begin{center}
\textbf{\Large Semidefinite programming bounds on fractional cut-cover and maximum 2-SAT for highly regular graphs} \\
\vspace{1.5cc}
{ \sc Henrique Assumpção\footnotemark[1],
  Gabriel Coutinho\footnotemark[1]\\
\vspace{0.3cm}
{\small \footnotemark[1]Department of Computer Science, Federal University of Minas Gerais, Brazil}\\
\vspace{0.3cm}
{\small \texttt{[henrique.soares,gabriel]@dcc.ufmg.br}}}
 \end{center}

 \begin{abstract}
We use semidefinite programming to bound the fractional cut-cover parameter of graphs in association schemes in terms of their smallest eigenvalue. We also extend the equality cases of a primal-dual inequality involving the Goemans-Williamson semidefinite program, which approximates \sc{maxcut}, to graphs in certain coherent configurations. Moreover, we obtain spectral bounds for \sc{max 2-sat} when the underlying graphs belong to an association scheme by means of a certain semidefinite program used to approximate quadratic programs, and we further develop this technique in order to explicitly compute the optimum value of its gauge dual in the case of distance-regular graphs.
 \end{abstract}
\section{Introduction}\label{sec:intro}
Let $G = (V,E)$ be a simple, undirected, loopless graph with adjacency matrix $A$. The well-known maximum cut problem (\sc{maxcut}) consists of finding a partition of $V$ into two sets that maximizes the number of edges between them, and we can express this problem as the following quadratic program:
\begin{equation}\label{eq:mc_def}
    \mc(G) := \max\left\{\frac{1}{4} \innprod{L}{xx^T} : x \in \fld{R}^V,x_i^2 = 1\right\},
\end{equation}
where $L$ is the Laplacian matrix of $G$, and $\innprod{\cdot}{\cdot}$ is the trace inner product of the full real matrix algebra $M_V(\fld{R})$ of $|V| \times |V|$ matrices indexed by $V$. The celebrated Goemans-Williamson algorithm \cite{Goemans1995-gl} shows how to approximate \sc{maxcut} within a factor of $\alpha_{\text{GW}}\approx 0.878$ by constructing cuts based on solutions to the semidefinite program (\sc{sdp}) relaxation of \eqref{eq:mc_def} given by:
\begin{equation}\label{eq:eta_def}
    \begin{split}
        \eta(G) :=& \max\left\{\frac{1}{4}\innprod{L}{M} : M \succcurlyeq 0,\diag(M) = \allones \right\}\\
    \end{split}
\end{equation}
where $M \succcurlyeq 0$ denotes that the square matrix $M$ is positive semidefinite, $\allones$ is the all-ones vector, and $\diag$ is the function that maps the diagonal entries of a matrix to a vector.

Much information about $\mc(G)$ and $\eta(G)$ can be obtained by studying other programs that satisfy a type of duality known as antiblocking duality, or more generally as gauge duality. The former has been successfully exploited to study many combinatorial objects throughout the past decades, such as matroids and permutations \cite{Fulkerson1971}, and the latter has been recently employed as a general framework for studying pairs of graphs parameters such as the coclique number, the fractional chromatic number, and the Lovász Theta \cite{proenca2021dual}.

In order to properly define the notion of antiblocking or gauge dual pairs of parameters, it is necessary to view them as functions whose input is a graph together with edge weights $w \in \mathbb{R}_+^E$ (see \cite{proenca2023} for the specific treatment, or \cite[Part III]{rockafellar1997convex} for a general presentation). For the particular case of $\mc(G)$ and $\eta(G)$, we can simply consider the weighted Laplacian
\[
\mathcal{L}(w) := \sum_{ij \in E}w_{ij}(e_i-e_j)(e_i-e_j)^T
\]
which can be seen as an operator mapping edge weights to symmetric matrices. This operator maps nonnegative weights to positive semidefinite matrices, and we note that $L = \mathcal{L}(\allones)$. In this work, however, we focus on the unweighted scenario (all weights constant, or, equivalently, equal to $1$), and therefore we take the shortcut of defining the dual graph parameters as the special case of the dual gauge functions taken on weights everywhere equal to $1$.

The antiblocking dual parameter to \sc{maxcut} is the fractional cut-cover number (\sc{fcc}), which is defined as
\begin{equation}\label{eq:fcc_def}
    \fcc(G) := \min\left\{\allones^Ty : y \in \fld{R}^{\mathcal{P}(V)}_+,\sum_{S \subseteq V}y_S\cdot\allones_{\delta(S)} \geq \allones\right\},
\end{equation}
where $\mathcal{P}(V)$ is the power set of $V$, $\delta(S)$ is the set of edges with one end in $S$ and the other in $V \setminus S$, and $\allones_{\delta(S)}$ is the indicator vector of $\delta(S)$.
The integer solutions for the previous program provide a covering of the edges of the graph by cuts, and the minimum number of such cuts is known as the cut-cover number (\sc{cc}). Computing \sc{cc} arises in certain tests for short circuits in printed circuit boards \cite{LOULOU1992301}, and finding exact or even approximate solutions is known to be NP-hard \cite{TUNGYANG1994193,motwani}. The \sc{fcc} parameter was first introduced by Šámal \cite{SAMAL2005455}, who employed it to study graph homomorphisms and cut-continuous functions. Neto and Ben-Ameur \cite{NETO2019168} later demonstrated how to construct a polynomial-time computable approximation for \sc{fcc} by means of vector colourings, and also showed how this problem is related to frequency allocation. 

Recently, the authors of \cite{proenca2023} have shown how to construct an efficient approximation algorithm for the weighted version of \sc{fcc} with factor $1/\alpha_{\text{GW}} \approx 1.139$, by combining the Goemans-Williamson algorithm with the dual gauge parameter of $\eta(G)$ given by:
\begin{equation}\label{eq:eta_dual_def}
\begin{split}
        \eta^\circ(G):=& \min\{\mu :  \mu \geq 0,N \succcurlyeq 0, \diag(N) = \mu\cdot\allones,\frac{1}{4}\mathcal{L}^*(N) \geq \allones\},
\end{split}
\end{equation}
where $\mathcal{L}^*$ denotes the adjoint of the Laplacian of $G$, and is defined by $\mathcal{L}^*(N)_{ij} = N_{ii} + N_{jj} - 2N_{ij}$ for an edge $ij \in E(G)$. In order to derive the previous expression, we can first consider the SDP dual program to the formulation of $\eta(G)$ in \eqref{eq:eta_def}:
\begin{equation}\label{eq:eta_sdp_dual}
    \eta(G) = \min\left\{\rho  : \rho \geq 0,x \in \fld{R}^V,\rho \geq \allones^Tx,\Diag(x) \succcurlyeq \frac{1}{4}L\right\},
\end{equation}
where $\Diag$ is the function that maps a vector to a diagonal matrix containing its entries, which can be seen as the adjoint of $\diag$. As extensively discussed in \cite[Sec. 3.1.]{proenca2023}, the dual gauge function can be defined analogously to a dual norm as
\[
\eta^\circ(G) = \max\{\allones^Tw:w \in \mathbb{R}^E_+,x \in \mathbb{R}, \Diag(x) -\frac{1}{4}\mathcal{L}(w) \succcurlyeq 0, \allones^Tx \leq 1\},
\]
where we note that the constraints are obtained by requiring a solution to the weighted version of \eqref{eq:eta_sdp_dual} to have objective value at most $1$. The program given in \eqref{eq:eta_dual_def} is promptly obtained by SDP strong duality.

Using the framework of gauge duality, one can show that the pairs $\mc(G),\fcc(G)$ and $\eta(G),\eta^\circ(G)$ satisfy
\begin{subequations}
\begin{alignat}{2}
    |E| &\leq \mc(G)\fcc(G)\label{eq:ineq_mc_comb},\\
    |E| &\leq \eta(G)\eta^\circ(G)\label{eq:ineq_eta_sdp},
\end{alignat}
\end{subequations}
and it can also be shown that equality is attained in both cases for edge-transitive graphs (for a complete discussion of gauge duality and its applications to graph parameters, see \cite{proenca2021dual}, and for a study of the equality cases of similar equations as the above involving the Lovász Theta, see \cite{de_Carli_Silva_2019}).

In this context, we are interested in studying the optimal solutions for the semidefinite programs $\eta(G),\eta^\circ(G)$ when $G$ presents high structural regularity in order to better understand their relationship with \sc{maxcut} and \sc{fcc}. We are also interested in studying the solutions of more general semidefinite programs that can be used to approximate problems such as \sc{max 2-sat}. More specifically, this work achieves the following:
\begin{itemize}
    \item In Sections \ref{sec:mc_cc}, \ref{sec:mc_1wrg}, we extend the equality cases of \eqref{eq:ineq_eta_sdp} for graphs belonging to certain coherent configurations, which include all distance-regular graphs. 
    We combine an observation from \cite{Bachoc2012} about orthogonal projections onto $*$-algebras with the algebraic properties of coherent algebras to construct the proof;
    \item In Section \ref{sec:mc_as}, we calculate $\eta^\circ(G)$ in terms of the smallest eigenvalue of the adjacency matrix of $G$ for graphs belonging to association schemes, which in turn yields a spectral bound to \sc{fcc}. The proof utilizes a key observation from \cite{Goemans1999} that the structure of such graphs allows us to convert certain semidefinite programs into linear programs, enabling us to compute an explicit optimal solution;
    \item In Section \ref{sec:max2sat}, we consider a more general semidefinite program framework used for approximating quadratic programs, first introduced in \cite{Goemans1995-gl} to approximate \sc{max 2-sat}. We compute the optimal value of this semidefinite program when the underlying graphs belong to an association scheme, which in turn yields bounds to \sc{max 2-sat} in terms of the first eigenmatrix of the scheme, and we also compute the optimum of its gauge dual when the scheme arises from a distance-regular graph. We combine the methods developed throughout Section \ref{sec:mc_fcc} with classical facts about association schemes, such as the orthogonality relations and the algebraic properties of the eigenmatrices, to arrive at the desired results.
\end{itemize}

\section{Coherent configurations and association schemes}\label{sec:coherent}
We say that a set $\mathcal{C} = \{A_0,A_1,...,A_d\}$ of square matrices with entries in $\{0,1\}$ is a \textit{coherent configuration} if the following hold:
\begin{enumerate}[(1)]
    \item $\sum_i A_i = J$, where $J$ denotes the matrix with all entries equal to one;
    \item If $A_i \in \mathcal{C}$, then $A_i^T \in \mathcal{C}$;
    \item If $A_i \in \mathcal{C}$ and $A_i$ has a nonzero diagonal entry, then it is a diagonal matrix;
    \item There are nonnegative integers $p_{ij}^l$ such that
    \[
    A_iA_j = \sum_{l = 0}^d p_{ij}^l\cdot A_l,
    \]
    for any indices $0 \leq i,j \leq d$.
\end{enumerate}
The diagonal matrices in $\mathcal{C}$ that partition the identity matrix $I$ are called \textit{fibers}, and if there is only one of these, that is, if $I \in \mathcal{C}$, then we say that the configuration is \textit{homogeneous}. If all matrices in $\mathcal{C}$ commute, we say that the configuration is \textit{commutative}, and in this case it is easy to see that the configuration must also be homogeneous: the only nonzero diagonal matrix with entries in $\{0,1\}$ that commutes with $J$ is the identity matrix. If all matrices in $\mathcal{C}$ are symmetric, then by taking the transpose of the resulting expression for $A_iA_j$ given by item (4), we see that $\mathcal{C}$ must also be commutative, and hence homogeneous. For the remainder of this text, we shall refer to symmetric coherent configurations as \textit{association schemes}, following Cameron~\cite{cameron2003coherent}. We shall also refer to coherent configurations and association schemes simply as configurations and schemes, respectively.

Coherent configurations were initially introduced by Higman~\cite{Higman1975-oi} to study permutation groups, as, for instance, one can always construct a coherent configuration from the orbitals of a permutation group acting on some set, as well as from the Cayley graphs generated by its conjugacy classes. Association schemes on the other hand were first introduced to study statistical experiments, but have since found many important applications in graph theory --- e.g.\ in the study of distance-regular and strongly-regular graphs~\cite{brouwer2011distance} ---, coding theory --- e.g.\ in the study of error-correcting codes~\cite{delsarte1973algebraic} ---, and in many other areas of combinatorics~\cite{bailey2004association}.
In the case of commutative configurations, we get that the linear span of the matrices in $\mathcal{C}$ over $\fld{C}$ forms a commutative $*$-algebra --- that is, an algebra closed under conjugate transposition ---, which in turn implies that there exists a basis of primitive idempotents $E_0 = (1/|V|)J,E_1,\ldots,E_d$ that are the projectors onto the common eigenspaces of the matrices in $\mathcal{C}$ (see for instance~\cite[Ch.~2]{bailey2004association}). The $*$-algebra spanned by the matrices in a configuration $\mathcal{C}$ is known as a \textit{coherent algebra}, and in the case of association schemes we obtain the well-known Bose--Mesner algebra.
The familiar vertex-transitive and edge-transitive graphs are intimately related to coherent algebras. If $\Gamma = \operatorname{Aut}(G)$ denotes the automorphism group of a graph $G$, then the set
\[
C(\Gamma) := \{M \in M_V(\fld{C}) : MP = PM,\;\forall P \in \Gamma\},
\]
known as the centralizer algebra (or commutant algebra) of $\Gamma$, is a coherent algebra, where the group $\Gamma$ is identified with its natural representation as a subgroup of permutation matrices. Some structural properties of $G$ correspond to algebraic properties of $C(\Gamma)$: if $G$ is vertex-transitive then $C(\Gamma)$ is always homogeneous; and if $G$ is edge-transitive we have three possibilities: (i) if $G$ is $1$-arc-transitive, then $C(\Gamma)$ will be homogeneous and $A$ will belong to $\mathcal{C}$; (ii) if $G$ is edge-transitive and vertex-transitive but not $1$-arc-transitive, then $C(\Gamma)$ will be homogeneous and $A$ will \textit{split} in $\mathcal{C}$, that is, $A = A_i + A_i^T$ for some index $0 \leq i \leq d$; or (iii) if $G$ is edge-transitive but not vertex-transitive, then it will be bipartite and thus $C(\Gamma)$ will have two fibers and $A$ will split in $\mathcal{C}$ (see~\cite[Ch.~17]{biggs1993algebraic} for more details).

Our analysis relies on one key result about these objects: the orthogonal projection onto a coherent algebra preserves positive semidefiniteness, i.e., if $M$ is positive semidefinite, then the orthogonal projection $M'$ onto a coherent algebra $\mathcal{A}$ is also positive semidefinite (see \cite[Corollary 9.1]{Bachoc2012}). In the next sections, we use this fact to study solutions to \eqref{eq:eta_def} and \eqref{eq:eta_dual_def} that lie in coherent algebras with similar structure as to the ones associated with edge-transitive graphs, that is, we consider graphs whose adjacency matrix $A$ either belongs to or splits in some coherent configuration $\mathcal{C}$.

\section{MAXCUT and FCC}\label{sec:mc_fcc}
\subsection{Graphs in coherent configurations}\label{sec:mc_cc}
In this section we study the equality cases of \eqref{eq:ineq_eta_sdp}, which will prove to be useful for deriving the desired spectral bounds for \sc{fcc} and \sc{max 2-sat}. We begin with an auxiliary result that allows us to assume the existence of optimal solutions for \eqref{eq:eta_def} and \eqref{eq:eta_dual_def} in a given coherent algebra, provided that some technical conditions are satisfied. The existence of such solutions is known when the underlying coherent algebra contains the matrices that define the SDP (see for instance \cite{Goemans1999,deKlerk2011,Bachoc2012,van_Dam_2014}), however in our case this is not true in general and an adaptation is required, as we show below.
\begin{lemma}\label{lemma:proj}
    Let $G$ be a graph with Laplacian matrix $L$, and let $\mathcal{C} = \{A_0,...,A_d\}$ be a coherent configuration with coherent algebra $\mathcal{A}$ such that $L \in \mathcal{A}$. If $M$ is a feasible solution for \eqref{eq:eta_def}, then its orthogonal projection onto the coherent algebra $\mathcal{A}$ generated by $\mathcal{C}$ is also feasible and has the same objective value. The same is true for any feasible solution $N$ of \eqref{eq:eta_dual_def}.
\end{lemma}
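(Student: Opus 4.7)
The plan is to let $M' = \pi_{\mathcal{A}}(M)$ be the orthogonal projection of $M$ onto $\mathcal{A}$, and to verify that $M'$ satisfies each of the three constraints of \eqref{eq:eta_def}. Positive semidefiniteness is immediate from \cite[Corollary 9.1]{Bachoc2012}. Equality of the objectives, $\langle L, M\rangle = \langle L, M'\rangle$, follows at once from $L \in \mathcal{A}$ together with $M - M' \perp \mathcal{A}$. The subtle constraint is $\diag(M') = \allones$; my approach is to expand $M' = \sum_l c_l A_l$ in the basis $\{A_l\}$, which is orthogonal with respect to the Frobenius inner product since the $A_l$ have pairwise disjoint supports, so $c_l = \langle A_l, M\rangle / \langle A_l, A_l\rangle$. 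Only fibers contribute to the diagonal, by property (3), and for each fiber $F$ supported on $V_F$ I have $\langle F, M\rangle = \sum_{v \in V_F} M_{vv} = |V_F| = \langle F, F\rangle$. Hence the coefficient of every fiber is $1$, their diagonals sum to $I$, and $\diag(M') = \allones$.

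For the dual, set $N' = \pi_{\mathcal{A}}(N)$. Positive semidefiniteness is again immediate, and the same fiber-by-fiber computation gives $\diag(N') = \mu \cdot \allones$, so the dual objective $\mu$ is unchanged. The main obstacle is verifying $\tfrac14 L^*(N') \geq \allones$. The first point to establish is that $A$ itself lies in $\mathcal{A}$: since $L \in \mathcal{A}$, the diagonal of $L$ is a linear combination of fibers, so the degree matrix $D$ is in $\mathcal{A}$, and consequently $A = D - L \in \mathcal{A}$. Writing $A = \sum_{l \in I_E} A_l$, every edge of $G$ lies in the support of a unique such $A_l$. For an edge $ij$ with $A_l(i,j) = 1$ and $i \in F_s, j \in F_t$, the value $L^*(N')_{ij} = c_{F_s} + c_{F_t} - 2 c_l$ depends only on $l$.

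The key structural step is to rewrite each of $c_{F_s}$, $c_{F_t}$, $c_l$ as an average of entries of $N$ taken over the support of $A_l$. This uses the standard regularity fact for coherent configurations: any $A_l$ with support contained in $F_s \times F_t$ has constant row-sum $v_l$ on $F_s$ and constant column-sum $u_l$ on $F_t$, which implies $\sum_{(s',t') \in \mathrm{supp}(A_l)} N_{s's'} = v_l \sum_{s' \in F_s} N_{s's'}$, and similarly for $N_{t't'}$ and $N_{s't'}$. Consequently
\[
\tfrac14 L^*(N')_{ij} \;=\; \tfrac14 \cdot \operatorname{avg}_{(s',t') \in \mathrm{supp}(A_l)} \bigl(N_{s's'} + N_{t't'} - 2 N_{s't'}\bigr) \;=\; \operatorname{avg}_{(s',t')} \tfrac14 L^*(N)_{s't'} \;\geq\; 1,
\]
since each $(s',t') \in \mathrm{supp}(A_l)$ is an edge of $G$ and $N$ is feasible. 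The hard part of this plan is precisely this averaging step; once the configuration-theoretic bookkeeping about row and column sums of the $A_l$ on fibers is in place, everything else is direct.
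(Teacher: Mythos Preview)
Your proof is correct and follows the paper's overall template: expand the projection in the orthogonal basis $\{A_l\}$, use \cite[Corollary 9.1]{Bachoc2012} for positive semidefiniteness, read off the diagonal from the fiber coefficients, and for the dual constraint show that the relevant entry of $N'$ on an edge is an average of edge-values of $N$.

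The one genuine difference lies in how you verify $\tfrac14 L^*(N')\geq\allones$. The paper exploits the already-established fact that $\diag(N)=\mu\cdot\allones$ (and hence $\diag(N')=\mu\cdot\allones$), so that $L^*(N)_{ab}=2\mu-2N_{ab}$ and $L^*(N')_{ij}=2\mu-2N'_{ij}$; it then only needs the trivial observation that $N'_{ij}$ is the average of the $N_{ab}$ over the support of the relevant $A_l$. You instead avoid using the constant-diagonal property at this step and show directly, via the constant row/column sums of $A_l$ over its source and target fibers, that each of the three summands $N'_{ii},N'_{jj},N'_{ij}$ is itself an average over $\mathrm{supp}(A_l)$ of the corresponding entries of $N$. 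Your route invokes slightly more coherent-configuration structure (the block decomposition and row/column regularity of the $A_l$), whereas the paper's route is shorter because the specific SDP constraint already pins the diagonal. On the other hand, your averaging identity $L^*(N')_{ij}=\operatorname{avg}_{(s',t')\in\mathrm{supp}(A_l)}L^*(N)_{s't'}$ is more robust: it would continue to work for variants of \eqref{eq:eta_dual_def} in which the diagonal is not forced to be constant.
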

\begin{proof}
    Let $M,N$ be feasible solutions to \eqref{eq:eta_def} and \eqref{eq:eta_dual_def}, respectively. By \cite[Corollary 9.1]{Bachoc2012}, it follows that both projections $M'$ and $N'$ on $\mathcal{A}$ are positive semidefinite matrices. As the basis matrices in $\mathcal{C}$ form an orthogonal basis to $\mathcal{A}$, we may write:
    \[
    M' = \sum_{i=0}^d \frac{\innprod{M}{A_i}}{\innprod{A_i}{A_i}}\cdot A_i.
    \]
    Now recall that
    \[
    \innprod{M}{A_i} = \text{tr}(MA_i^T) = \text{sum}(M \circ A_i),
    \]
    where $\circ$ denotes the entrywise matrix product---also known as the Schur product---and \text{sum} denotes the operator that adds all entries of a given matrix. If $A_i$ is a fiber of the configuration, that is, a 01-diagonal matrix, and as the diagonal entries of $M$ are all constant, it follows that $\text{sum}(M \circ A_i)$ equals the number of $1$s in $A_i$'s diagonal, i.e., $\innprod{A_i}{A_i}$. Combining this with the previous expression for $M'$ yields $\diag(M') = \allones$, and the linearity of the operators involved also easily yields that $\diag(N') = \mu\cdot\allones$. Since $L \in \mathcal{A}$ and the orthogonal projection operator is self-adjoint, it follows that
    \[
    \innprod{M'}{L} = \innprod{M}{L'} = \innprod{M}{L},
    \]
    hence $M'$ has the same objective value as $M$.
    
    It remains to show that $\mathcal{L}^*(N') \geq 4 \cdot \allones$. First, since $L \in \mathcal{A}$ and $L$ has a nonzero entry for each edge $ij$ in $G$, it follows that there exists a unique basis matrix $A_l \in \mathcal{C}$ such that $(A_l)_{ij} = 1$, thus by expressing $N'$ similarly to the formula for $M'$ above, we obtain
    \[
    N'_{ij} = \frac{\innprod{N}{A_l}}{\innprod{A_l}{A_l}} = \frac{\sum_{ab:(A_l)_{ab} = 1} N_{ab}}{\innprod{A_l}{A_l}}.
    \]
    As the basis matrices in $\mathcal{C}$ have disjoint support, it follows that all nonzero entries in $A_l$ correspond to edges in $G$, and by the feasibility conditions of \eqref{eq:eta_dual_def}, it holds that, for any edge $ab \in E(G)$,
    \[
    4 \leq \mathcal{L}^*(N)_{ab} = N_{aa} + N_{bb} - 2N_{ab} = 2\mu-2N_{ab}.
    \]
    Thus, for $ij$ edge of $G$,
    \[
    -N'_{ij} = \frac{\sum_{ab:(A_l)_{ab} = 1} -N_{ab}}{\innprod{A_l}{A_l}} \geq 2-\mu.
    \]
    This in turn implies
    \[
    \mathcal{L}^*(N')_{ij} = 2\mu - 2N'_{ij}  \geq 4,
    \]
    and as the objective value of \eqref{eq:eta_dual_def} is given by $\mu$, it follows that $N'$ is feasible and has the same objective value as $N$, which concludes the proof.
\end{proof}
The previous result allows us to assume that optimal solutions for \eqref{eq:eta_def} and \eqref{eq:eta_dual_def} lie in $\mathcal{A}$ when it contains the Laplacian matrix $L$. For the remaining of this section, we will be interested in regular graphs that display similar properties to edge-transitive graphs. It is worth noting that for a $k$-regular graph, since $L = kI-A$, the condition that $L$ belongs to a certain coherent algebra is equivalent to requiring that $A$ also does. With these remarks, we can now conclude a type of equivalence between the feasibility conditions of the aforementioned programs.
\begin{lemma}\label{lemma:equiv_eta}
    Let $G$ be a graph with adjacency matrix $A$ that either belongs to or splits in a coherent configuration $\mathcal{C}$, and let $M \in \mathcal{A}$. Then for any $\mu > 0$, $N = \mu\cdot M$ is a feasible solution for \eqref{eq:eta_dual_def} if, and only if, $M$ is a feasible solution for \eqref{eq:eta_def} with objective value at least $|E|/\mu$.
\end{lemma}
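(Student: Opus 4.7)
The plan is to translate the dual feasibility of $N = \mu M$ into constraints on $M$, and then exploit the coherent-algebra hypothesis to bridge the gap between the objective-value bound in \eqref{eq:eta_def} (a global condition) and the pointwise lower bound on $L^*(M)_{ij}$ required by \eqref{eq:eta_dual_def} (a condition edge by edge).

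First, since $\mu > 0$, the substitution $N = \mu M$ makes the dual constraints $N \succcurlyeq 0$, $\diag(N) = \mu \cdot \allones$, and $\tfrac{1}{4} L^*(N) \geq \allones$ equivalent respectively to $M \succcurlyeq 0$, $\diag(M) = \allones$, and $L^*(M)_{ij} \geq 4/\mu$ for every edge $ij \in E(G)$. Next I would record the identity
\[
\innprod{L}{M} = \sum_{ij \in E} L^*(M)_{ij},
\]
which follows by expanding $\innprod{L}{M}$ via $L = D - A$ and using $\diag(M) = \allones$ (so that $\sum_i d_i M_{ii} = 2|E|$). The forward direction is then immediate: summing $L^*(M)_{ij} \geq 4/\mu$ over edges yields $\innprod{L}{M} \geq 4 |E|/\mu$, i.e., $M$ attains objective value at least $|E|/\mu$ in \eqref{eq:eta_def}. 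Notice that this direction does not actually use $M \in \mathcal{A}$.

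The converse is where the coherent-algebra hypothesis genuinely enters, and my main claim is that under the stated assumptions $L^*(M)_{ij}$ takes a single constant value $\alpha$ across all edges of $G$. Writing $M = \sum_k c_k A_k$ and using the pairwise disjoint supports of the basis matrices in $\mathcal{C}$: if $A = A_l$ belongs to $\mathcal{C}$, then $M_{ij} = c_l$ for every edge $ij$; if instead $A = A_l + A_l^T$ splits in $\mathcal{C}$, the symmetry of $M$ forces $c_l = c_{l'}$, where $A_{l'} = A_l^T$, and again $M_{ij}$ is the same constant on every edge. Combined with $M_{ii} = M_{jj} = 1$, this gives $L^*(M)_{ij} = \alpha$ on $E$. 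The objective hypothesis $\tfrac{1}{4}\innprod{L}{M} \geq |E|/\mu$ together with the identity above then becomes $\alpha |E|/4 \geq |E|/\mu$, so $\alpha \geq 4/\mu$, which is precisely the missing pointwise condition. I expect the only real subtlety to be the split case, where one must carefully track how transposition permutes the index set of $\mathcal{C}$ and invoke symmetry of $M$ to equate the coefficients of $A_l$ and $A_l^T$ in its expansion.
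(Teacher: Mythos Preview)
Your argument is correct and follows essentially the same route as the paper: establish that $M_{ij}$ is constant over edges whenever $M\in\mathcal{A}$ and $A$ belongs to or splits in $\mathcal{C}$, use the adjoint identity $\innprod{L}{M}=\sum_{ij\in E}L^*(M)_{ij}$, and convert between the global objective bound and the pointwise constraint. The only cosmetic difference is that the paper packages the identity directly as $\tfrac14\innprod{L}{M}=\tfrac{|E|}{4}L^*(M)_{ij}$ and uses constancy in both directions, whereas you sum the edge inequalities for the forward direction (correctly noting that $M\in\mathcal{A}$ is not needed there) and invoke constancy only for the converse.
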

\begin{proof}
Let $L$ be the Laplacian of $G$, $M \in \mathcal{A}$ and $\mu > 0$. Note that if $A$ either belongs to or splits in $\mathcal{C}$, then the entries of $M$ will be constant on the edges of $G$. This is immediate in the former case, and as for the latter case it suffices to note that if $A = A_i + A_i^T$ for some $A_i \in \mathcal{C}$, then
\[
M_{ab} = \frac{\innprod{M}{A_i}}{\innprod{A_i}{A_i}} = \frac{\innprod{M}{A_i^T}}{\innprod{A_i^T}{A_i^T}} = M_{ba},
\]
for any edge $ab \in E(G)$ such that $(A_i)_{ab} = 1$ --- and similarly for the edges in $A_i^T$. Now, by noting that $L$ can be seen as the weighted Laplacian function evaluated at $\allones$, we get
\begin{equation}\label{eq:temp_eq}
\frac{1}{4}\innprod{L}{M} = \frac{1}{4}\innprod{\allones}{\mathcal{L}^*(M)} = \frac{1}{4}\sum_{ij \in E(G)}\mathcal{L}^*(M)_{ij} = \frac{\mathcal{L}^*(M)_{ij}\cdot|E|}{4},
\end{equation}
for any $ij \in E(G)$, where the last equality follows from the previous remarks. If $N := \mu \cdot M$ is feasible for \eqref{eq:eta_dual_def}, then it easily follows that $M \succcurlyeq 0$ and $\diag(M) = \allones$, and since $4 \leq \mathcal{L}^*(N)_{ij} = \mu \mathcal{L}^*(M)_{ij}$ for any $ij \in E(G)$, this combined with \eqref{eq:temp_eq} yields that the objective value of $M$ is at least $|E|/\mu$. Conversely, if $M$ is feasible for \eqref{eq:eta_def} with objective value at least $|E|/\mu$, again it easily follows that $N \succcurlyeq 0,\diag(N)= \mu\cdot\allones$, and combining the fact that all entries of $M$ are constant on the edges of $G$ with \eqref{eq:temp_eq}, we get that $\mathcal{L}^*(N)_{ij} \geq 4$ for any $ij \in E(G)$, which concludes the proof. 
\end{proof}
Combining the two lemmas, we prove the main result of this section:
\begin{theorem}\label{thm:eta_coherent_algebra}
    If $G$ is a graph whose adjacency matrix either belongs to or splits in a coherent configuration $\mathcal{C}$, then
    \[
    \eta(G)\eta^\circ(G) = |E|.
    \]
\end{theorem}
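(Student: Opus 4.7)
The plan is to combine Lemmas \ref{lemma:proj} and \ref{lemma:equiv_eta} with the gauge-duality inequality \eqref{eq:ineq_eta_sdp}. The latter already yields $\eta(G)\eta^\circ(G) \geq |E|$, so the content of the theorem is the reverse bound $\eta(G)\eta^\circ(G) \leq |E|$.

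First I would verify that the Laplacian $L$ belongs to the coherent algebra $\mathcal{A}$, which is the hypothesis required to invoke Lemma \ref{lemma:proj}. Since the fibers of $\mathcal{C}$ partition $I$, we have $I \in \mathcal{A}$, and by hypothesis either $A \in \mathcal{C}$ or $A = A_i + A_i^T$ for some $A_i \in \mathcal{C}$; in either case $A \in \mathcal{A}$. Combined with the regularity noted in the paragraph before the theorem (so that $L = kI - A$), this gives $L \in \mathcal{A}$.

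Next, by Lemma \ref{lemma:proj} I may select an optimal solution $M^*$ of \eqref{eq:eta_def} lying in $\mathcal{A}$, so that $\tfrac{1}{4}\innprod{L}{M^*} = \eta(G)$. Setting $\mu := |E|/\eta(G) > 0$, the matrix $M^*$ is a feasible solution for \eqref{eq:eta_def} whose objective value equals $|E|/\mu$. The ``if'' direction of Lemma \ref{lemma:equiv_eta} then produces $N^* := \mu \cdot M^*$ as a feasible solution for \eqref{eq:eta_dual_def} with objective value $\mu$, so that $\eta^\circ(G) \leq \mu = |E|/\eta(G)$. Multiplying through by $\eta(G)$ gives the matching upper bound.

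The only point that requires care, rather than a genuine obstacle, is the choice of $\mu$: Lemma \ref{lemma:equiv_eta} converts a feasible primal solution whose objective is at least $|E|/\mu$ into a feasible dual solution of value $\mu$, and the choice $\mu = |E|/\eta(G)$ makes this primal requirement tight for the optimum $M^*$, thereby producing precisely the matching dual value. Once Lemmas \ref{lemma:proj} and \ref{lemma:equiv_eta} are in place, all that remains is this bookkeeping.
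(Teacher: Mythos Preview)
Your proposal is correct and follows essentially the same route as the paper: pick an optimal $M\in\mathcal{A}$ via Lemma~\ref{lemma:proj}, set $\mu=|E|/\eta(G)$, apply Lemma~\ref{lemma:equiv_eta} to obtain a feasible $N=\mu M$ for \eqref{eq:eta_dual_def}, and combine the resulting bound $\eta^\circ(G)\le |E|/\eta(G)$ with \eqref{eq:ineq_eta_sdp}. Your added verification that $L\in\mathcal{A}$ (needed for Lemma~\ref{lemma:proj}) is a welcome clarification the paper leaves to the surrounding discussion.
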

\begin{proof}
    Let $M$ be an optimal solution for \eqref{eq:eta_def}, which by Lemma \ref{lemma:proj} can be assumed to belong to the coherent algebra generated by $\mathcal{C}$. We can thus take $\mu = |E|/\eta > 0$ and consider the matrix $N = \mu\cdot M$. As $M$ is feasible with objective value $\eta = |E|/\mu$, we can apply Lemma \ref{lemma:equiv_eta} to conclude that $N$ is feasible for \eqref{eq:eta_dual_def}, with objective value $|E|/\eta$ by construction. Since \eqref{eq:eta_dual_def} is a minimization problem, we get
    \[
    \eta(G)\eta^\circ(G) \leq |E|,
    \]
    which together with \eqref{eq:ineq_eta_sdp} implies the desired result.
\end{proof}
We recall that if $G$ is edge-transitive, and if $\Gamma$ is its automorphism group, then the centralizer $C(\Gamma)$ is a coherent algebra whose coherent configuration either contains or splits $A$. In the case of distance-regular graphs, their Bose-Mesner algebra clearly contains $A$ in its association scheme. Thus, we obtain the following corollary:
\begin{corl}
    If $G$ is either an edge-transitive or a distance-regular graph, then \eqref{eq:ineq_eta_sdp} is an equality.

    \hfill \qedsymbol
\end{corl}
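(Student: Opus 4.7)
The plan is to reduce the corollary directly to Theorem \ref{thm:eta_coherent_algebra}, whose hypothesis requires that the adjacency matrix $A$ of $G$ either belongs to or splits in some coherent configuration $\mathcal{C}$. So the entire content of the proof is to exhibit, for each of the two classes of graphs, a coherent configuration with this property. Once this is done, the equality $\eta(G)\eta^\circ(G) = |E|$ is immediate from the theorem.

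For distance-regular graphs, I would simply invoke the standard fact that the distance matrices $A_0 = I, A_1 = A, A_2, \ldots, A_d$ of $G$ form an association scheme (the Bose--Mesner scheme of $G$), which is in particular a coherent configuration. Since $A = A_1$ is one of the defining matrices, $A$ trivially belongs to $\mathcal{C}$, and Theorem \ref{thm:eta_coherent_algebra} applies.

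For edge-transitive graphs, I would take $\mathcal{C}$ to be the coherent configuration underlying the centralizer algebra $C(\Gamma)$ of the automorphism group $\Gamma = \mathrm{Aut}(G)$, as already discussed in Section \ref{sec:coherent}. The three cases recalled there cover every edge-transitive graph: in case (i), $G$ is $1$-arc-transitive and $A \in \mathcal{C}$; in cases (ii) and (iii), $A$ splits as $A_i + A_i^T$ for some index $i$. Either way the hypothesis of the theorem is met, and we conclude.

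The ``main obstacle,'' if there is one, is purely bookkeeping: making sure the trichotomy for edge-transitive graphs really is exhaustive and that the orbital configuration of $\Gamma$ genuinely is a coherent configuration in the sense of Section \ref{sec:coherent}. Both are classical (see e.g. \cite[Ch.17]{biggs1993algebraic}), so no calculation is required beyond citing them and appealing to Theorem \ref{thm:eta_coherent_algebra}.
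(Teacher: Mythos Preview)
Your proposal is correct and matches the paper's own argument essentially verbatim: the corollary is deduced from Theorem~\ref{thm:eta_coherent_algebra} by taking the Bose--Mesner association scheme in the distance-regular case (so $A=A_1\in\mathcal{C}$) and the orbital configuration of $C(\Gamma)$ in the edge-transitive case, where the trichotomy of Section~\ref{sec:coherent} guarantees that $A$ either belongs to or splits in $\mathcal{C}$.
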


\subsection{1-walk-regular graphs}\label{sec:mc_1wrg}
In this section, we briefly discuss a family of graphs that also attains equality in \eqref{eq:ineq_eta_sdp}, even though their adjacency matrices do not belong to or split in coherent configurations in general. 

A graph $G$ with adjacency matrix $A$ is said to be walk-regular if, for any nonnegative integer $l$, the number of closed walks of length $l$ on any vertex is constant, i.e., $A^l$ has constant diagonal. From this definition, we can easily see that both vertex-transitive and distance-regular graphs are walk-regular, and also that, more generally, any graph whose adjacency matrix belongs to a homogeneous coherent algebra is walk-regular. These graphs were originally introduced by Godsil and McKay \cite{GODSIL198051}, who also notably proved that their characteristic polynomial $\Phi_G(x)$ is invariant with respect to vertex removal, i.e., $\Phi_{G-v}(x) = \Phi_{G-u}(x)$ for any $u,v \in V(G)$. Rowlinson \cite{rowlinson} found a way of characterizing distance-regular graphs in terms of walks, showing that such graphs are defined by the property that the number of walks between any two vertices is a constant that depends only on the size of the walk and the distance between the vertices.  

Equipped with the previous definitions, one can see walk-regularity and distance-regularity as particular instances of $k$-walk-regularity, introduced by Dalfó, Fiol and Garriga \cite{Dalfo2009-yp}. We say that a graph of diameter $d$ is $k$-walk-regular, with $0 \leq k \leq d$, if the number of walks of length $l$ between any two vertices at distance $i$ is a constant that only depends on $l$ and $i$, given that $0 \leq i \leq k, l \geq 0$. Hence, the case where $k = 0$ corresponds to walk-regular graphs, and $k = d$ to distance-regular graphs.

Now, we focus on the case of $1$-walk-regular graphs, that intuitively generalize graphs that are both vertex-transitive and edge-transitive. In this case, the definition implies that for any nonnegative integer $l$, there exists constants $a_l,b_l$ such that
\[
A^l \circ I = a_l\cdot I\quad\text{and}\quad A^l \circ A = b_l\cdot A.
\]
Let $\mathcal{A}$ be the algebra generated by the adjacency matrix $A$ of such a graph --- called the adjacency algebra. This is a $*$-algebra, and due to $1$-walk-regularity, we can construct an orthogonal basis $\{I,A,A_2,...,A_d\}$ such that for $l \geq 2$ we have $A_l \circ I = 0$ and $A_l \circ A = 0$, i.e., all matrices projected onto $\mathcal{A}$ will have constant diagonal entries and be constant on the edges of $A$. Combining this with the fact that the Laplacian matrix will certainly belong to $\mathcal{A}$ allows us to conclude similar versions of Lemmas \ref{lemma:proj} and \ref{lemma:equiv_eta}, which promptly implies the following:
\begin{corl}
    If $G$ is a $1$-walk-regular graph, then \eqref{eq:ineq_eta_sdp} is an equality.

    \hfill \qedsymbol
\end{corl}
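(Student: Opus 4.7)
The plan is to reproduce the argument of Theorem \ref{thm:eta_coherent_algebra} almost verbatim, using the adjacency algebra $\mathcal{A}$ of $G$ in place of the coherent algebra generated by $\mathcal{C}$. Since $A$ is symmetric, $\mathcal{A}$ is a commutative $*$-algebra, and by \cite[Corollary 9.1]{Bachoc2012} the orthogonal projection onto $\mathcal{A}$ preserves positive semidefiniteness. The crucial structural input is the orthogonal basis $\{I, A, A_2, \ldots, A_d\}$ of $\mathcal{A}$ introduced just before the statement: the identities $A_l \circ I = 0$ and $A_l \circ A = 0$ for $l \geq 2$ force each such $A_l$ to vanish on the diagonal and on the edges of $G$.

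First I would establish the analogue of Lemma \ref{lemma:proj} for $\mathcal{A}$. Given a primal-feasible $M$ with $\diag(M) = \allones$, the orthogonal projection onto $\mathcal{A}$ expands as
\[
M' = \frac{\innprod{M}{I}}{\innprod{I}{I}} I + \frac{\innprod{M}{A}}{\innprod{A}{A}} A + \sum_{l \geq 2} \frac{\innprod{M}{A_l}}{\innprod{A_l}{A_l}} A_l.
\]
The coefficient of $I$ equals $\tr{M}/|V| = 1$, and the remaining basis elements contribute nothing to the diagonal, hence $\diag(M') = \allones$; since $L = kI - A$ belongs to $\mathcal{A}$ (walk-regularity forces $G$ to be $k$-regular for some $k$), self-adjointness of the projection gives $\innprod{M'}{L} = \innprod{M}{L}$. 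For a dual-feasible $N$ with objective $\mu$, the same expansion gives $\diag(N') = \mu \allones$ and $N'_{ij} = \innprod{N}{A}/\innprod{A}{A}$ for every edge $ij$, since $I$ and the $A_l$ with $l \geq 2$ vanish on edges; this equals the average of $N_{ab}$ over edges, and the pointwise bound $N_{ab} \leq \mu - 2$ arising from $L^*(N)_{ab} \geq 4$ yields $L^*(N')_{ij} \geq 4$ exactly as in the original lemma.

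Next I would observe that the analogue of Lemma \ref{lemma:equiv_eta} is immediate: any $M \in \mathcal{A}$ has constant diagonal and constant entries on edges, so $\innprod{L}{M} = |E| \cdot L^*(M)_{ij}$ for any edge $ij$, and the equivalence between primal-feasibility of $M$ with objective value at least $|E|/\mu$ and dual-feasibility of $N = \mu M$ follows exactly as before. With both analogues in hand, the proof of Theorem \ref{thm:eta_coherent_algebra} transfers verbatim: an optimal $M$ for \eqref{eq:eta_def} may be assumed to lie in $\mathcal{A}$, and then $N = (|E|/\eta(G)) \cdot M$ is dual-feasible for \eqref{eq:eta_dual_def} with objective $|E|/\eta(G)$, which together with \eqref{eq:ineq_eta_sdp} yields the claimed equality.

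The main obstacle is really to justify the existence of the orthogonal basis with the stated Schur-product properties, which the text asserts without full detail. This is a Gram-Schmidt argument relying critically on $1$-walk-regularity: every power $A^l$ has constant diagonal (by walk-regularity) and constant value on edges (by $1$-walk-regularity), so orthogonalizing against $I$ and $A$ in the trace inner product produces a matrix with zero trace and vanishing sum over the edges of $G$; by constancy of diagonal and edge-entries, this forces it to vanish pointwise on the diagonal and on the edges, exactly as required.
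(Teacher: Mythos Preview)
Your proposal is correct and follows exactly the approach sketched in the paper, which merely asserts that analogues of Lemmas~\ref{lemma:proj} and~\ref{lemma:equiv_eta} hold for the adjacency algebra $\mathcal{A}$ of a $1$-walk-regular graph and then invokes the proof of Theorem~\ref{thm:eta_coherent_algebra}. You supply the details the paper omits---in particular the Gram--Schmidt justification that the orthogonal basis $\{I,A,A_2,\dots,A_d\}$ has $A_l\circ I=0$ and $A_l\circ A=0$ for $l\geq 2$, and the verification that projection onto $\mathcal{A}$ preserves the feasibility constraints---and all of these are handled correctly.
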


\subsection{Association schemes}\label{sec:mc_as}
If we consider a $k$-regular graph $G$ with adjacency matrix $A$, it can be easily shown that
\[
\eta(G) \leq \frac{|V|}{4}(k - \lambda_{\text{min}}(A))\quad\text{and}\quad \eta^\circ(G) \geq \frac{2k}{k-\lambda_{\text{min}}(A)}.
\]
The first bound can be obtained by choosing an appropriate feasible solution to \eqref{eq:eta_def}, and by combining this with \eqref{eq:ineq_eta_sdp} one can promptly obtain the lower bound to $\eta^\circ$. By further requiring that $A$ belongs to an association scheme, Goemans and Rendl \cite{Goemans1999} proved that $\eta(G)$ is always equal to the aforementioned upper bound. In this section, we will show that, under the same assumptions, $\eta^\circ$ is also always equal to the above lower bound.

Throughout the remainder of this section, we assume that $A$ belongs to an association scheme $\mathcal{S} = \{I,A,A_2,...,A_d\}$ with Bose-Mesner algebra $\mathcal{A}$ and primitive idempotents $E_0,...,E_d$, and we let $P,Q \in M_{d+1}(\fld{R})$ be the first and second eigenmatrices of $\mathcal{S}$, respectively, that is, the matrices defined by the equations
\begin{equation}
A_i = \sum_{l=0}^d P_{li}\cdot E_l\quad\text{and}\quad E_i = \frac{1}{|V|}\sum_{l=0}^d Q_{li}\cdot A_l. \label{eq:A_i and E_l}	
\end{equation}
We note that $PQ = QP = |V|\cdot I$, and that these are both real matrices as $\mathcal{S}$ is a set of symmetric matrices. We denote the degrees of the scheme by $k_i = p_{ii}^0$, hence each matrix $A_i$ can be seen as the adjacency matrix of a $k_i$-regular graph, and we further note that $P_{l0} = Q_{l0} = 1$, $P_{0l} = k_l,Q_{0l} = m_l$,$P_{ji}m_j=Q_{ij}k_i$, for all indices $0 \leq i,j,l \leq d$, where $m_l = \text{tr}(E_l)$ (see \cite[Ch.2]{bailey2004association} for more details). From these definitions one can obtain the well-known orthogonality relations:
\[
\sum_{l=0}^d \frac{P_{il}P_{jl}}{k_l} = 
        \begin{cases}
            \frac{|V|}{m_i}&\quad\text{if }i = j,\\
            0&\quad\text{otherwise},
        \end{cases}\quad\text{and}\quad \sum_{l=0}^d P_{li}P_{lj}m_l = 
        \begin{cases}
            |V|k_i&\quad\text{if }i = j,\\
            0&\quad\text{otherwise}.
        \end{cases}
\]
From \eqref{eq:A_i and E_l}, the eigenvalues of any linear combination $\sum_i x_i\cdot A_i$ of the basis matrices of $\mathcal{A}$ are given by the linear combinations of the respective eigenvalues of the $A_i$, hence the constraint $M \in \mathcal{A},M \succcurlyeq 0$ is equivalent to requiring that $Px \geq 0$, where $x \in \fld{R}^{d+1}$ is the vector of coefficients such that $M = \sum_{i}x_i\cdot A_i$. With these observations, we can prove the following:

\begin{theorem}\label{thm:eta_dual_as}
    If $G$ is a $k$-regular graph whose adjacency matrix $A$ belongs to an association scheme, and if $\lambda_{\text{min}}(A)$ is its smallest eigenvalue, then
    \[
    \eta^\circ(G) = \frac{2k}{k-\lambda_{\text{min}}(A)}.
    \]
    In particular, we have
    \[
     1 \leq \frac{\fcc(G)}{\frac{2k}{k-\lambda_{\text{min}}(A)}} \leq  \frac{1}{\alpha_{\text{GW}}}.
    \]
\end{theorem}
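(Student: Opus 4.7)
The plan has two parts. First, I would exhibit an explicit feasible solution $N$ for \eqref{eq:eta_dual_def} lying in the Bose-Mesner algebra $\mathcal{A}$ and attaining objective value $2k/(k - \lambda_{\text{min}}(A))$; this gives $\eta^\circ(G) \leq 2k/(k - \lambda_{\text{min}}(A))$. Second, I would combine Theorem \ref{thm:eta_coherent_algebra} with the Goemans-Rendl upper bound $\eta(G) \leq (|V|/4)(k - \lambda_{\text{min}}(A))$ recalled at the start of this section to obtain $\eta^\circ(G) = |E|/\eta(G) \geq 2k/(k - \lambda_{\text{min}}(A))$, closing the argument.

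For the explicit solution, let $l^* \in \{1, \ldots, d\}$ be an index with $P_{l^* 1} = \lambda_{\text{min}}(A)$ and set
\[
N = \frac{2k|V|}{m_{l^*}(k - \lambda_{\text{min}}(A))} \cdot E_{l^*}.
\]
Since $N$ is a nonnegative scalar multiple of the projector $E_{l^*}$, it is positive semidefinite. Because $E_{l^*} \in \mathcal{A}$ has constant diagonal entries $(E_{l^*})_{ii} = m_{l^*}/|V|$, one gets $\diag(N) = \mu \cdot \allones$ with $\mu = 2k/(k - \lambda_{\text{min}}(A))$. For an edge $ij$ of $G$, \eqref{eq:A_i and E_l} gives $(E_{l^*})_{ij} = Q_{1 l^*}/|V|$, and the identity $Q_{1 l^*} k = P_{l^* 1} m_{l^*}$ yields $N_{ij} = 2\lambda_{\text{min}}(A)/(k - \lambda_{\text{min}}(A))$. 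A direct computation then shows $L^*(N)_{ij} = 2\mu - 2N_{ij} = 4$, verifying feasibility with the claimed objective value.

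The \sc{fcc} bounds follow by combining previously established facts. The lower inequality $\fcc(G) \geq \eta^\circ(G)$ comes from $\mc(G) \leq \eta(G)$, together with \eqref{eq:ineq_mc_comb} and Theorem \ref{thm:eta_coherent_algebra}: one has $\fcc(G) \geq |E|/\mc(G) \geq |E|/\eta(G) = \eta^\circ(G)$. The upper inequality $\fcc(G) \leq \eta^\circ(G)/\alpha_{\text{GW}}$ is precisely the Goemans-Williamson-style approximation guarantee for \sc{fcc} established in \cite{proenca2023}.

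The main obstacle is identifying the correct ansatz for $N$. Once one observes that Lemma \ref{lemma:proj} permits restriction to $\mathcal{A}$ and that in the basis of primitive idempotents the SDP collapses to a one-constraint linear program, concentrating all mass on the idempotent whose eigenvalue equals $\lambda_{\text{min}}(A)$ is forced by an inspection of the edge constraint. Everything else is routine bookkeeping with the eigenmatrix relations $P_{l0} = 1$, $Q_{il} k_i = P_{li} m_l$, and $(E_l)_{ii} = m_l/|V|$.
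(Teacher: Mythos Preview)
Your proof is correct and complete. The paper, however, takes a different and more self-contained route: it uses Lemma~\ref{lemma:proj} to rewrite $\eta^\circ(G)$ as a linear program in the coefficients $x_0,\ldots,x_d$ of $N=\sum_i x_i A_i$, dualizes via LP strong duality, and then solves the dual explicitly by observing that the dual variable $y$ is determined by two scalars $a,b$, one of which can be taken to be zero. In particular, the paper's argument does not invoke Theorem~\ref{thm:eta_coherent_algebra} or the Goemans--Rendl value of $\eta(G)$ at all.

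Interestingly, the paper itself remarks immediately after the proof that your route---combining Theorem~\ref{thm:eta_coherent_algebra} with the Goemans--Rendl result---is a valid shortcut. Your version adds the explicit optimal $N$ as a scaled primitive idempotent $E_{l^*}$, which the paper does not write down. The trade-off is that the paper's LP machinery is reused verbatim in Section~\ref{sec:max2sat} to compute $\gamma^\circ$, where no analogue of Theorem~\ref{thm:eta_coherent_algebra} is available (equality in \eqref{eq:gamma_ineq} can fail, as the Paley-graph example there shows). Your argument, while cleaner for this theorem, does not extend to that setting.
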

\begin{proof}
    The previous comments about association schemes combined with Lemma \ref{lemma:proj} allows us to rewrite the semidefinite program $\eta^\circ(G)$ as the linear program
    \[
    \eta^\circ(G) = \min\left\{x_0 : x \in \fld{R}^{d+1},Px \geq 0, x_0 \geq 0, x_0 - x_1 \geq 2\right\},
    \]
    where $x = (x_0,...,x_d)$, and the constraint $x_0 - x_1 \geq 2$ follows from the constraint $\mathcal{L}^*(N) \geq 4\cdot\allones$ and the fact that $\mathcal{L}^*(N)_{ij} = 2x_0 - 2x_1$ if we assume that $N \in \mathcal{A}$. We can then use linear program strong duality and some straightforward algebraic manipulation to obtain:
    \[
    \eta^\circ(G) = \max\left\{2b : y \in \fld{R}_+^{d+1},a,b \in \fld{R}_+, P^Ty = b\cdot e_1 + (1-a-b)\cdot e_0 \right\},
    \]
    where $e_i$ is the $i$-th canonical basis vector in $\fld{R}^{d+1}$, with indexing starting at $0$. Noting that the inverse of $P$ is $(1/|V|)\cdot Q$, that $P_{ji}m_j = Q_{ij}k_i$ and that the first row of $Q$ contains the traces of the respective primitive idempotents, we get
    \[
    y = \frac{1}{|V|}\sum_{l=0}^d\left(\left(\frac{bP_{l1}}{k} + 1-a-b\right)m_l\right)\cdot e_l,
    \]
    thus $y$ is completely determined by our choice of $a,b$. We can also note that if $(y,a,b)$ is feasible for our program, we can take $a'=0$ and $y' = y + (a/|V|)\cdot Q^Te_0$, which is nonnegative since $Q_{0i} = m_i \geq 0$ for any index $0 \leq i \leq d$, and thus $(y',a'=0,b)$ is a feasible solution with the same objective value. So we may always assume that an optimal solution has $a = 0$. Combining this with the fact that $P_{01} = k$ and that each entry of $y$ must be nonnegative, we get that
    \[
     b \leq \frac{k}{k-P_{l1}},
    \]
    for all indices $1 \leq l \leq d$. As we wish to maximize $b$, we set
    \[
    b = \underset{1 \leq l \leq d}{\min}\left\{\frac{k}{k-P_{l1}}\right\} = \frac{k}{k-\lambda_{\text{min}}(A)},
    \]
    and with this the result follows immediately.
\end{proof}
We remark that one could also obtain the previous result by combining Theorem \ref{thm:eta_coherent_algebra} with the aforementioned result from \cite{Goemans1999} that $\eta(G) = (|V|/4)(k-\lambda_{\text{min}}(A))$ for graphs in association schemes. However, the constructive proof given above highlights some methods that will be particularly useful in the next section.

\section{Quadratic programs and MAX 2-SAT}\label{sec:max2sat}
We now study a broader framework for approximating quadratic programs via semidefinite programs, that generalizes the results for \sc{maxcut} derived in the previous sections. Let $G_1 = (V,E_1),G_2 = (V,E_2)$ be two graphs on the same vertex set $V$, and let $L$ be the Laplacian matrix of $G_1$ and $K$ be the signless Laplacian matrix of $G_2$. We can define the quadratic program
\begin{equation}\label{eq:qp_def}
    \begin{split}
        \text{qp}(G_1,G_2) &:= \max\left\{\sum_{ij \in E_1}(1-x_ix_j) + \sum_{ij \in E_2}(1 + x_ix_j): x \in \fld{R}^V,x_i^2 = 1 \right\} \\
        &= \max\left\{\left\langle \frac{L + K}{2} , xx^T \right\rangle: x \in \fld{R}^V,x_i^2 = 1 \right\},
    \end{split}
\end{equation}
with semidefinite relaxation given by
\begin{equation}\label{eq:gamma_def}
    \gamma(G_1,G_2) := \max\left\{\left\langle \frac{L + K}{2} , M\right\rangle : M \succcurlyeq 0,\diag(M) = \allones\right\}.
\end{equation}
These programs can be seen as generalizations of \sc{maxcut} and its semidefinite relaxation defined in equations \eqref{eq:mc_def} and \eqref{eq:eta_def}, respectively, since
\[
\text{mc}(G_1) = \frac{1}{2}\text{qp}(G_1,G_2)\quad\text{and}\quad\eta(G_1) = \frac{1}{2}\gamma(G_1,G_2),
\]
if we set $E_2 = \emptyset$. The above programs can also be easily extended to allow for edge weights in $G_1$ and $G_2$, and in \cite{Goemans1995-gl}, Goemans and Williamson show how to use solutions of $\gamma(G_1,G_2)$ to construct an $\alpha_{\text{GW}}$-approximation algorithm for $\text{qp}(G_1,G_2)$.

Notably, \eqref{eq:qp_def} can be used to model \sc{max 2-sat}. Recall that an instance of the \sc{max 2-sat} problem consists of a collection of Boolean clauses $C_1,...,C_m$, where each clause is a disjunction of at most two literals drawn from a set of variables $\{z_1,...,z_n\}$, with a literal being either a variable or its negation. The problem asks then to assign truth values to the variables in order to maximize the number of satisfied clauses. This is a classic problem that is known to be NP-hard \cite{garey}. The approximation ratio of $\approx 0.878$ given in \cite{Goemans1995-gl} has been improved to $\approx 0.94$ in general \cite{Lewin}, with ratios as high as $\approx 0.954$ attained for certain restricted versions of the problem \cite{austrin,brakensiek2023tightapproximabilitymax2sat}.

Following \cite[Sec. 7.2.1]{Goemans1995-gl}, one can model \sc{max 2-sat} as a quadratic program by introducing $n+1$ variables $\{x_0,...,x_n\}$ with $x_i \in \{\pm1\}$, where $x_0$ is an auxiliary variable that encodes a truth assignment by setting $z_i$ to true if $x_i = x_0$, and false otherwise, for $i \in \{1,...,n\}$. The value $v(C)$ of a clause $C$ is defined to be $1$ if it is true and $0$ otherwise, and as clauses can have at most two literals, it is easy to manually check that the value of any clause can be expressed as a linear combination of terms of the form $1+x_ix_j$ and $1-x_ix_j$. The \sc{max 2-sat} problem can then be written as
\[
\begin{split}
    &\max\left\{\sum_{j=1}^m v(C_j):x_i^2 = 1,i \in \{1,...,n\} \right\}\\
    =&\max\left\{\sum_{i,j \in \{0,...,n\},i \neq j} \alpha_{ij}(1-x_ix_j) + \beta_{ij}(1+x_ix_j):x_i^2 = 1,i \in \{1,...,n\} \right\},
\end{split}
\]
for some nonnegative coefficients $\alpha_{ij},\beta_{ij}$. The graphs $G_1,G_2$ can thus be constructed with vertex set given by the variables $x_i$ and weighted edges defined by $\alpha_{ij},\beta_{ij}$, respectively, and similarly to what was done for \textsc{maxcut}, we study the instances in which all nonzero edge weights are constant. 

Analogously to \eqref{eq:eta_dual_def}, define the program
\begin{align}\label{eq:gamma_dual_def}
    \gamma^\circ(G_1,G_2) := \min\{\mu : \  & \mu \geq 0,N \succcurlyeq 0,  \diag(N) = \mu\cdot\allones, \nonumber \\ & \frac{1}{2}\mathcal{L}^*(N) \geq \allones,\frac{1}{2}\mathcal{K}^*(N) \geq \allones\},
\end{align}
where $\mathcal{K}^*$ is a function that maps a positive semidefinite matrix $N$ to a vector in $\fld{R}_+^{E_2}$ such that $\mathcal{K}^*(N)_{ij} = N_{ii} + N_{jj} + 2N_{ij}$ --- that is, the adjoint of the signless Laplacian function that maps nonnegative edge weights to the usual weighted signless Laplacian matrix. The parameters $\gamma,\gamma^\circ$ form a primal-dual pair of gauges (when specializing to the unweighted scenario, as discussed in the introduction) satisfying the inequality
\begin{equation}\label{eq:gamma_ineq}
    |E_1|+|E_2| \leq \gamma(G_1,G_2)\gamma^\circ(G_1,G_2).
\end{equation}
Similarly to what was done in the previous sections, we inquire into the optimal solutions of these semidefinite programs for graphs with high structural regularity, and also discuss the equality cases of \eqref{eq:gamma_ineq}.

From now on, we assume that the adjacency matrices $A_1,A_2$ of $G_1,G_2$ are elements of an association scheme $S = \{I,A_1,A_2,...,A_d\}$ with Bose-Mesner algebra $\mathcal{A}$, and we use the same notation as in Section \ref{sec:mc_as} to refer to the primitive idempotents and eigenmatrices of $\mathcal{S}$. We first note that the feasibility region of \eqref{eq:gamma_def} is the same as \eqref{eq:eta_def}, hence any feasible solution $M$ may be assumed to belong to $\mathcal{A}$ by an analogous version of Lemma \ref{lemma:proj}.
We can apply a similar argument as in Section \ref{sec:mc_as} to transform $\gamma(G_1,G_2)$ into a linear program: the constraint $\Diag(M) = \allones$ implies that if $M \in \mathcal{A}$ then $M = I + \sum_{l=1}^dx_i\cdot A_i$, hence the constraint $M \succcurlyeq 0$ is equivalent to $Rx \geq 0$, where $x \in \fld{R}^{d}$ and $R$ is the $(d+1) \times d$ matrix obtained by deleting the first column of $P$. Noting that $L = k_1\cdot I - A_1,Q = k_2\cdot I+A_2$, it follows that
\[
\innprod{\frac{L + K}{2}}{M} = \frac{1}{2}(\innprod{k_1\cdot I-A_1}{M} + \innprod{k_2\cdot I+A_2}{M}) = \frac{|V|}{2}((k_1+k_2) + (k_2x_2 - k_1x_1)),
\]
and with this we can rewrite \eqref{eq:gamma_def} as the following linear program:
\begin{equation}\label{eq:gamma_lp}
    \begin{split}
        \gamma(G_1,G_2) &= \frac{|V|}{2}((k_1 + k_2) + \max\{(k_2\cdot e_2 - k_1\cdot e_1)^Tx : x \in \fld{R}^d,Rx \geq -\allones\})\\
        &= \frac{|V|}{2}((k_1 + k_2) + \min\{\allones^Ty : y \in \fld{R}^{d+1}_+,R^Ty = k_1\cdot e_1 - k_2\cdot e_2\}),
    \end{split}
\end{equation}
where the second equality follows from strong duality. As a consequence, we explicitly compute $\gamma(G_1,G_2)$:
\begin{theorem}\label{thm:gamma}
    If $G_1,G_2$ are graphs whose adjacency matrices $A_1,A_2$ belong to an association scheme with first eigenmatrix $P$, then
    \[
    \gamma(G_1,G_2) = \frac{|V|}{2}  \left((k_1 + k_2) + \underset{0 \leq l \leq d}{\max}\{P_{l2} - P_{l1}\}\right).
    \]
\end{theorem}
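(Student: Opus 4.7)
The plan is to compute the value of the linear program appearing in the second line of \eqref{eq:gamma_lp}, which reduces the theorem to verifying that
\[
\tau := \min\{\allones^T y : y \in \fld{R}^{d+1}_+, R^T y = k_1 e_1 - k_2 e_2\} = \max_{0 \leq l \leq d}(P_{l2} - P_{l1}).
\]
Mirroring the constructive argument used for Theorem \ref{thm:eta_dual_as}, I would exhibit explicit feasible solutions for the primal and dual LPs whose common value is this maximum.

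For the dual upper bound, set $c := \max_l(P_{l2} - P_{l1})$ and define
\[
y_l := \frac{[(P_{l1} - P_{l2}) + c]\,m_l}{|V|}, \quad l = 0, \ldots, d.
\]
Non-negativity is immediate from the choice of $c$. To check $R^T y = k_1 e_1 - k_2 e_2$, split $y$ into the ``natural'' candidate $y'_l := (P_{l1} - P_{l2})\,m_l / |V|$ and the correction $c\cdot m_l/|V|$. The column orthogonality relation $\sum_l P_{li} P_{lj} m_l = |V| k_i \delta_{ij}$ applied with $j \in \{1,2\}$ shows that $R^T y' = k_1 e_1 - k_2 e_2$, while the same relation with $j = 0$ (so $P_{l0} = 1$) shows that the correction term lies in the kernel of $R^T$. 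Analogously, $\allones^T y' = 0$ by orthogonality with column $0$, and $\sum_l m_l/|V| = 1$, so $\allones^T y = c$ and $\tau \leq c$.

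For the matching lower bound, fix $l^* \in \arg\max_l(P_{l2} - P_{l1})$ and consider the matrix $M := (|V|/m_{l^*}) E_{l^*}$. Since $\diag(E_l) = (m_l/|V|)\allones$ and $E_{l^*}$ is an orthogonal projector, we have $M \succcurlyeq 0$ and $\diag(M) = \allones$. Expanding $E_{l^*}$ via $E_i = |V|^{-1}\sum_l Q_{li} A_l$ together with the identity $Q_{i l^*} = P_{l^* i} m_{l^*}/k_i$, the coefficients of $M = I + \sum_{i \geq 1} x_i A_i$ work out to $x_i = P_{l^* i}/k_i$, so $k_2 x_2 - k_1 x_1 = P_{l^* 2} - P_{l^* 1} = c$. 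Feasibility $(Rx)_l \geq -1$ follows by splitting the row orthogonality identity $\sum_{i=0}^d P_{li} P_{l^* i}/k_i = (|V|/m_l)\delta_{l l^*}$ into its $i = 0$ contribution (equal to $1$) and its $i \geq 1$ contribution, giving $(Rx)_l = (|V|/m_l)\delta_{l l^*} - 1 \geq -1$. By weak LP duality this forces $\tau \geq c$, matching the previous bound.

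The only genuinely non-obvious step is recognizing $(m_l/|V|)_l$ as a kernel direction of $R^T$ and using it to correct the natural candidate $y'$ to a non-negative feasible point; once this is in hand, both ends of the argument reduce to routine applications of the two orthogonality relations for $P$ displayed before the theorem.
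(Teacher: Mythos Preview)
Your proof is correct and shares its core construction with the paper's: both build the feasible dual point $y = y' + c\,z/|V|$ with $y'_l = (P_{l1}-P_{l2})m_l/|V|$ and $z_l = m_l$, verifying feasibility and objective value via the column orthogonality relations.

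The difference lies in how optimality is certified. The paper observes that $R^T$ has rank $d$, so $\ker(R^T) = \langle z\rangle$; this means \emph{every} feasible $y$ has the form $y' + (\alpha/|V|)z$ with objective value exactly $\alpha$, and the minimum $\alpha$ keeping $y \geq 0$ is $\max_l(P_{l2}-P_{l1})$. You instead exhibit a primal witness $M = (|V|/m_{l^*})E_{l^*}$ and invoke weak duality. Both are clean; the paper's route is a touch shorter (no second construction), while yours has the pleasant side effect of displaying an explicit optimal SDP matrix---a scaled primitive idempotent---which the paper does not record.
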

\begin{proof}
    We let $z$ denote the first row of the second eigenmatrix $Q$, that is, we have that ${z = (1,m_1,...,m_d)}$ is a vector containing the multiplicities of the scheme, and moreover, since $QP = |V|\cdot I$, and $R$ is obtained from deleting the first column of $P$, it follows that $R^Tz = 0$. As $P$ is invertible, we get that $R^T$ has rank $d$, which implies that $\ker(R^T) = \langle z \rangle$. On the other hand, we may use the orthogonality relations to explicitly construct a solution for the system $R^Ty = k_1\cdot e_1 - k_2\cdot e_2$ by considering
    \[
    y' = \frac{1}{|V|}\sum_{l=0}^d ((P_{l1}-P_{l2})m_l)\cdot e_l,
    \]
    and then noting that
    \[
    (R^Ty')_i = \frac{1}{|V|}\left(\sum_{l=0}^d P_{l1}P_{li}m_l - \sum_{l=0}^d P_{l2}P_{li}m_l \right) = \begin{cases}
        k_1,&\quad\text{if }i=1,\\
        -k_2,&\quad\text{if }i=2,\\
        0,&\quad\text{otherwise}.\\
    \end{cases}
    \]
    This implies that any solution for $R^Ty = k_1\cdot e_1 - k_2\cdot e_2$ is of the form $y = y' + (\alpha/|V|) \cdot z$, for any real $\alpha$, but note also that $\allones^Ty' = 0$ by the orthogonality relations, and $\allones^Tz = \sum_{l=0}^dm_l = |V|$. Hence, the objective value of any feasible $y$ is precisely $\alpha$. We then need to choose the minimum $\alpha$ that makes $y$ feasible, which is given by
    \[
    \alpha^* = \underset{0 \leq l \leq d}{\max}\{P_{l2} - P_{l1}\},
    \]
    since the entries of $y$ must all be nonnegative, and this together with \eqref{eq:gamma_lp} concludes the proof.
\end{proof}
As previously mentioned, one can construct an $\alpha_{\text{GW}}$-approximation algorithm for $\text{qp}(G_1,G_2)$ with the solutions of $\gamma(G_1,G_2)$, which combined with the previous theorem allows us to bound $\text{qp}(G_1,G_2)$ in terms of the entries of the first eigenmatrix of the scheme. In particular, this also allows us to bound the value of \textsc{max 2-sat} for the instances discussed at the beginning of this section.
\begin{corl}
    If $G_1,G_2$ are graphs whose adjacency matrices $A_1,A_2$ belong to an association scheme with first eigenmatrix $P$, then
    \[
    \alpha_{\text{GW}} \leq \frac{\text{qp}(G_1,G_2)}{\frac{|V|}{2}\cdot\left((k_1 + k_2) + \underset{0 \leq l \leq d}{\max}\{P_{l2} - P_{l1}\}\right)} \leq 1.
    \]
    \hfill \qedsymbol
\end{corl}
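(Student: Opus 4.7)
The plan is to observe that the denominator in the corollary's ratio is exactly the quantity computed in Theorem \ref{thm:gamma}, so the statement reduces to the sandwich
\[
\alpha_{\text{GW}} \cdot \gamma(G_1,G_2) \leq \text{qp}(G_1,G_2) \leq \gamma(G_1,G_2).
\]
Once this reformulation is in place, neither inequality requires any new machinery: both are immediate consequences of results that have already been recalled or established earlier in the excerpt.

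For the upper bound, I would argue directly from the definitions \eqref{eq:qp_def} and \eqref{eq:gamma_def}: any feasible $x \in \{\pm 1\}^V$ for the quadratic program yields $M = xx^T$, which is positive semidefinite with $\diag(M) = \allones$, hence feasible for \eqref{eq:gamma_def} with the same objective value. Taking the maximum over such $x$ gives $\text{qp}(G_1,G_2) \leq \gamma(G_1,G_2)$. This is the standard SDP-relaxation inequality and requires no appeal to the association scheme structure.

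For the lower bound I would invoke the Goemans--Williamson hyperplane rounding result recalled from \cite{goemans_williamson_mc} just before the statement, which produces from any optimal (or near-optimal) solution of \eqref{eq:gamma_def} a feasible $\pm 1$ assignment achieving expected objective at least $\alpha_{\text{GW}} \cdot \gamma(G_1,G_2)$ for the quadratic program. Since an expected value is attained by some outcome of the randomized rounding, we obtain $\text{qp}(G_1,G_2) \geq \alpha_{\text{GW}} \cdot \gamma(G_1,G_2)$. Finally, Theorem \ref{thm:gamma} identifies
\[
\gamma(G_1,G_2) = \frac{|V|}{2}\left((k_1+k_2) + \underset{0 \leq l \leq d}{\max}\{P_{l2} - P_{l1}\}\right),
\]
so dividing the two-sided inequality by this quantity yields the bounds in the corollary.

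There is essentially no obstacle here: the only nontrivial ingredient is Theorem \ref{thm:gamma} itself, which has just been proved, and the Goemans--Williamson approximation guarantee, which is quoted from the literature. The corollary is therefore a direct packaging of these two facts.
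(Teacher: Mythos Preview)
Your proposal is correct and matches the paper's own reasoning exactly: the corollary is presented there without proof (just a \qedsymbol), immediately after the sentence noting that the $\alpha_{\text{GW}}$-approximation guarantee for $\text{qp}(G_1,G_2)$ via $\gamma(G_1,G_2)$, combined with Theorem~\ref{thm:gamma}, yields the stated bounds. Your write-up simply makes explicit the two steps the paper leaves implicit.
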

We turn our attention to $\gamma^\circ(G_1,G_2)$, and in order to compute its optimal value analytically, we shall restrict ourselves to association schemes coming from distance-regular graphs. First, we recall that if a distance-regular graph $G$ with diameter $d$ has intersection array $\iota(G) = \{b_0,...,b_{d-1};c_1,...,c_d\}$, and if $P$ is the first eigenmatrix of the scheme generated by $G$, then
\begin{equation}\label{eq:drg_eig}
P_{l2} = \frac{k_2}{b_1k_1}(P_{l1}^2 - (k_1-b_1-1) P_{l1} - k_1),
\end{equation}
for any index $0 \leq l \leq d$ (see \cite[Ch.4]{brouwer2011distance} for more details). With this, we prove the following:
\begin{theorem}\label{thm:gamma_dual}
    Let $G_1$ be a distance-regular graph with diameter $d$ and let $G_2$ be its distance-$2$ graph, with respective adjacency matrices $A_1$ and $A_2$. If $P$ is the first eigenmatrix associated with the association scheme generated by $A_1$, then
    \[
    \gamma^\circ(G_1,G_2) = \begin{cases}
        \frac{k_1}{k_1 - P_{d1}},&\quad\text{if }k_2P_{d1} + k_1P_{d2} > 0,\\
        \frac{k_1}{k_1 - P_{d1}} - \frac{(k_2P_{d1} + k_1P_{d2})}{2k_2(k_1-P_{d1})},&\quad\text{otherwise.}
    \end{cases}
    \]
\end{theorem}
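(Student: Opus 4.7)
The plan is to follow the strategy of Theorem \ref{thm:eta_dual_as}: reduce the semidefinite program to a linear program using the scheme structure, take its LP dual, and finally exploit the distance-regularity of $G_1$ to cut the resulting polytope down to only two binding constraints.

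I would first establish an analog of Lemma \ref{lemma:proj} for $\gamma^\circ(G_1,G_2)$. Since $A_1, A_2 \in \mathcal{A}$, both $L$ and $K$ belong to $\mathcal{A}$, and the orthogonal projection of any feasible $N$ onto $\mathcal{A}$ preserves feasibility and the objective value --- the argument for $K^*$ being entirely parallel to the one given for $L^*$ in Lemma \ref{lemma:proj}. Writing a projected $N = \sum_i x_i A_i$ with $x_0 = \mu$, the constraint $\frac{1}{2}L^*(N) \geq \allones$ becomes $x_0 - x_1 \geq 1$ and $\frac{1}{2}K^*(N) \geq \allones$ becomes $x_0 + x_2 \geq 1$, while $N \succcurlyeq 0$ is equivalent to $Px \geq 0$. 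Thus $\gamma^\circ$ equals the linear program $\min\{x_0 : Px \geq 0,\, x_0 \geq 0,\, x_0 - x_1 \geq 1,\, x_0 + x_2 \geq 1\}$.

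I would then take the LP dual via strong duality, yielding variables $y \in \fld{R}_+^{d+1}$ and $y_L, y_K \geq 0$ with objective $y_L + y_K$ subject to $(P^T y)_1 = y_L$, $(P^T y)_2 = -y_K$, $(P^T y)_i = 0$ for $i \geq 3$, and $\sum_l y_l + y_L + y_K \leq 1$. Inverting $P$ via $Q/|V|$ and using $Q_{ij} = P_{ji} m_j / k_i$, each $y_l$ may be written explicitly as $(m_l/|V|)(\sigma + (P_{l1}/k_1) y_L - (P_{l2}/k_2) y_K)$ for $\sigma = \sum_l y_l$. Enforcing $y_l \geq 0$ for all $l$ together with the slack bound reduces the dual to the planar linear program
$$
\gamma^\circ(G_1,G_2) = \max\left\{y_L + y_K : y_L, y_K \geq 0,\ y_L \cdot \frac{k_1 - P_{l1}}{k_1} + y_K \cdot \frac{k_2 + P_{l2}}{k_2} \leq 1 \text{ for all } 0 \leq l \leq d\right\}.
$$

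The crux, where distance-regularity enters, is showing that only the constraints at $l = 0$ and $l = d$ are active. Fixing $(y_L, y_K) \in \fld{R}_+^2$, formula \eqref{eq:drg_eig} lets me express the $l$-th constraint as $f(\theta) \leq 1$, where $\theta = P_{l1}$ and $f$ is a convex quadratic in $\theta$ with leading coefficient $y_K / (b_1 k_1) \geq 0$; since every $P_{l1}$ lies in $[P_{d1}, k_1]$, a convex function on a compact interval attains its maximum at an endpoint, so the full constraint system collapses to $f(k_1) \leq 1$ and $f(P_{d1}) \leq 1$. The former simplifies to $y_K \leq 1/2$, and the latter is the natural bound involving $P_{d1}$ and $P_{d2}$. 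The resulting 2D polytope has two candidate optimal vertices: $(y_L, y_K) = (\tfrac{k_1}{k_1 - P_{d1}}, 0)$ with objective $\tfrac{k_1}{k_1 - P_{d1}}$, and the intersection $(y_L, 1/2)$ of the two binding lines whose objective is the second formula of the theorem; a direct comparison shows that the sign of $k_2 P_{d1} + k_1 P_{d2}$ determines which vertex is optimal, giving precisely the stated case split. The main subtlety is the convexity reduction in this step, which depends essentially on the quadratic form of \eqref{eq:drg_eig}, a property specific to distance-regular graphs beyond the mere membership of $A_1, A_2$ in an association scheme.
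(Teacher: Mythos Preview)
Your proposal is correct and follows essentially the same route as the paper: reduce $\gamma^\circ$ to a linear program in $\mathcal{A}$, pass to the dual, eliminate the slack variable, and use the quadratic relation \eqref{eq:drg_eig} to show that only the extreme eigenvalue constraints bind. The one noteworthy difference is the reduction step itself: the paper fixes $c$ (your $y_K$) and proves by a direct pairwise algebraic comparison that $\gamma_i^\circ(c)\le\gamma_j^\circ(c)$ whenever $P_{i1}\le P_{j1}$, whereas you observe that substituting \eqref{eq:drg_eig} makes the $l$-th constraint a \emph{convex} quadratic in $\theta=P_{l1}$, so its maximum over $[P_{d1},k_1]$ is attained at an endpoint. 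Your convexity argument is a bit more conceptual and dispatches all intermediate constraints at once; the paper's pairwise inequality is more hands-on but yields the slightly sharper monotonicity statement. Both rest on the same essential input, namely the quadratic recurrence \eqref{eq:drg_eig} specific to distance-regular graphs, and the final optimisation over the two remaining constraints is identical.
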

\begin{proof}
    Similarly to what was done in Theorem \ref{thm:eta_dual_as}, we can rewrite $\gamma^\circ(G_1,G_2)$ as the following linear program:
    \[
    \gamma^\circ(G_1,G_2) = \min\{x_0 : x \in \fld{R}^{d+1}, Px \geq 0, x_0 - x_1 \geq 1, x_0 + x_2 \geq 1\},
    \]
    where the last two constraints follow from the fact that any feasible $N \in \mathcal{A}$ for \eqref{eq:gamma_dual_def} is constant on the edges of $G_1$ and $G_2$. By strong duality, we have
    \begin{equation}\label{eq:gamma_dual_lp}
    \begin{split}
        \gamma^\circ(G_1,G_2) = \max\{&b+c : y \in \fld{R}^{d+1}_+,a,b,c \in \fld{R}_+,\\&P^Ty = b\cdot e_1 - c\cdot e_2 + (1-a-b-c)\cdot e_0\},
    \end{split}
    \end{equation}
    and since $|V|^{-1}\cdot Q$ is the inverse of $P$ and $P_{ji}m_j = Q_{ij}k_i$, we get that
    \[
    y = \frac{1}{|V|}\sum_{l=0}^d \left(\left(\frac{bP_{l1}}{k_1} - \frac{cP_{l2}}{k_2} + 1-a-b-c\right)m_l\right)\cdot e_l,
    \]
    and so $y$ is determined by our choice of $a,b,c$. Similarly to the proof of Theorem \ref{thm:eta_dual_as}, we may always assume that any optimal solution $(y,a,b,c)$ is such that $a = 0$, and since each entry of $y$ must be nonnegative, we get that $0 \leq c \leq 1/2$, and that
    \[
    b \leq \underset{1 \leq l \leq d}{\min}\left\{\left(\frac{k_1}{k_1 - P_{l1}} \right) \left(\frac{(1-c)k_2 - cP_{l2}}{k_2} \right) \right\}.
    \]
    If we fix some feasible $c$, as we wish to maximize $b+c$, we may always assume that the previous inequality is an equality, and thus any solution to our program is in fact uniquely determined by our choice of $c$. A simple algebraic manipulation then shows that we can rewrite $\gamma^\circ(G_1,G_2)$ as
    \begin{equation}\label{eq:thm_gamma_dual_middle}
    \gamma^\circ(G_1,G_2) = \underset{0 \leq c \leq 1/2}{\max}\left\{\underset{1 \leq l \leq d}{\min}\left\{\frac{k_1}{k_1 - P_{l1}} - c\frac{(k_1P_{l2} + k_2P_{l1})}{(k_1-P_{l1})k_2} \right\}\right\}.
     \end{equation}
     We now claim that for any fixed $c$, the index $l$ that achieves the minimum in the previous equation is $d$, and we shall prove this by constructing a chain of equivalent inequalities. Indeed, for any index $1 \leq l \leq d$, define
     \[
     \gamma^\circ_l(c) = \frac{k_1}{k_1 - P_{l1}} - c\frac{(k_1P_{l2} + k_2P_{l1})}{(k_1-P_{l1})k_2},
     \]
     and assume w.l.o.g. that the matrix $P$ is ordered such that $P_{d1} < \ldots < P_{01}$, that is, we order the distinct eigenvalues of $G_1$ in decreasing order (and note that there are precisely $d+1$ of those as $G_1$  is distance-regular with diameter $d$). Now consider the indices $1 \leq j \leq i \leq d$, and note that it suffices to show that $\gamma^\circ_i(c) \leq \gamma^\circ_j(c)$ for a fixed $0 \leq c \leq 1/2$. A straightforward computation shows that this is equivalent to requiring that
     \[
     P_{j2}(k_1-P_{i1}) - P_{i2}(k_1-P_{j1}) \leq k_2(P_{j1}-P_{i1}).
     \]
     By substituting the terms $P_{j2}$ and $P_{i2}$ with the respective expressions given in \eqref{eq:drg_eig} and noting that by assumption $P_{j1} - P_{i1} > 0$, we get that the previous inequality is equivalent to $P_{i1}(k_1-P_{j1}) \leq k_1(k_1-P_{j1})$, which holds since $k_1$ is the largest eigenvalue of $G_1$ and $j>0$. By combining this fact with \eqref{eq:thm_gamma_dual_middle}, we obtain
     \[
    \gamma^\circ(G_1,G_2) = \underset{0 \leq c \leq 1/2}{\max}\left\{\frac{k_1}{k_1 - P_{d1}} - c\left(\frac{k_2P_{d1} + k_1P_{d2}}{k_2(k_1-P_{d1})}\right) \right\}.
     \]
     If $k_2P_{d1} + k_1P_{d2} > 0$, we take $c = 0$ in order to maximize the objective value, and otherwise we take $c = 1/2$, which concludes the proof.
\end{proof}
We remark that one could further reduce the expression obtained in the previous theorem for $\gamma^\circ$ by substituting $P_{d2}$ according to \eqref{eq:drg_eig}, which would result in a term solely dependent on $P_{d1}$. This however results in a rather obscure expression that does not seem to present any further insight into the problem, and so we opted to omit it.

Since the semidefinite programs for $\gamma$ and $\gamma^\circ$ generalize $\eta$ and $\eta^\circ$, a natural question to ask is whether the equality cases of \eqref{eq:gamma_ineq} behave similarly to the ones of \eqref{eq:ineq_eta_sdp}. We have conducted computational experiments using SageMath \cite{sage} to explicitly check if \eqref{eq:gamma_ineq} is an equality for a given pair of graphs $G_1,G_2$. In particular, we checked all strongly-regular graphs (SRGs) contained in SageMath's implementation of Brouwer's SRG database \cite{Cohen_2016} with at most $1300$ vertices. Of all $1058$ SRGs in question, only $148$ attained equality in \eqref{eq:gamma_ineq}, with the remaining $910$ attaining a strict inequality. Notably, however, we managed to find large examples of SRGs for both cases, suggesting that equality in \eqref{eq:gamma_ineq} might hold for more than a finite set of graphs. The smallest SRG where \eqref{eq:gamma_ineq} is a strict inequality is the Paley graph of parameter $9$, obtained by first considering a finite field $\fld{F}_{9}$ with $9$ elements which will be the vertices of our graph, and then connecting two vertices if their difference is a square in $\fld{F}_{9}$. This is an arc-transitive graph and also a strongly-regular graph, hence its adjacency matrix $A$ belongs to an association scheme $\{I,A,J-A-I\}$, with first eigenmatrix given by
\[
P = \begin{pmatrix}
    1 & 4 & 4\\
    1 & 1 & -2\\
    1 & -2 & 1
\end{pmatrix}.
\]
If we then let $G_1,G_2$ be such graph and its complement, respectively, one can easily see that the vector $y = (0,1/2,0,0,1/2,1/4)$ is a feasible solution for the maximization program in \eqref{eq:gamma_dual_lp} with objective value $3/4 > 8/11 = (|E_1|+|E_2|)/\gamma$, implying that $\gamma(G_1,G_2)\gamma^\circ(G_1,G_2)>|E_1|+|E_2|$.

\section{Conclusion}

In this work, we discussed semidefinite programs used to approximate certain NP-hard graph parameters, and showed that they can be explicitly computed in terms of the eigenvalues when the underlying graphs present enough structural regularity. These computations yielded spectral bounds for \sc{fcc} and \sc{max 2-sat}, and also provided new insights into the relationship between these semidefinite programs and their combinatorial counterparts.

Here are some possible future research directions. We believe that the results from Sections \ref{sec:mc_cc} and \ref{sec:mc_1wrg} can provide further insight into the equality cases of \eqref{eq:ineq_mc_comb}, which in turn could lead to a deeper understanding of the relationship between \sc{maxcut} and \sc{fcc}. The example discussed in Section \ref{sec:max2sat} suggests that stronger conditions on the underlying graphs of \sc{max 2-sat} may be required to attain equality in \eqref{eq:gamma_ineq}, and future inquiry into these conditions could provide new interesting families of graphs and association schemes. Determining the combinatorial structure of the gauge dual of \sc{max 2-sat} may also lead to interesting results, and investigating whether one can construct an approximation algorithm for this parameter with $\gamma^\circ$ could lead to spectral bounds by means of the results in the previous section.

\section*{Acknowledgements}

Both authors acknowledge the support from FAPEMIG and CNPq.

\printbibliography

\end{document}